\newtheorem{theorem}{Theorem}
\theoremstyle{plain}
\numberwithin{equation}{section}
\begin{document}
\title[On characteristic curves]{On characteristic curves of developable
surfaces in Euclidean 3-space }
\author{Fatih Do\u{g}an}
\address{\textit{Current Adress: Bart\i n University, Department of
Mathematics, 74100, Bart\i n, Turkey}}
\email{\textit{mathfdogan@hotmail.com}}
\subjclass[2000]{ 53A04, 53A05}
\keywords{Developable surface, Geodesic curve, Asymptotic curve, Line of
curvature, General helix, Slant helix}

\begin{abstract}
We investigate the relationship among characteristic curves on developable
surfaces. In case parameter curves coincide with these curves, we show that
the base curve of a developable surface could be either a plane curve, a
circular helix, a general helix or a slant helix.
\end{abstract}

\maketitle

\section{Introduction}

Characteristic curves on surfaces are particular curves such as, geodesic
and asymptotic curves, or lines of curvature. A regular ruled surface in
Euclidean 3-space $\mathbb{E}^{3}$ whose the Gaussian curvature vanishes is
called a developable surface.

Izumiya and Takeuchi $[4]$ studied special curves, like cylindrical helices
and Bertrand curves from the viewpoint of the theory of curves on ruled
surfaces. They enlightened that cylindrical helices are related to Gaussian
curvature and Bertrand curves are related to mean curvature of ruled
surfaces. The same authors $[5]$ defined new special curves that called as
slant helices and conical geodesic curves which are generalizations of the
notion of helices and studied them on developable surfaces. They also
introduced the tangential Darboux developable surface of a space curve which
is defined by the Darboux developable surface of the tangent indicatrix of
the space curve and researched singularities of it.

Leite $[6]$ determined that the orthogonal systems of cycles (curves of
constant geodesic curvature) on the hyperbolic plane $\mathbb{H}^{2}$,
aiming at the classification of maximal surfaces with planar lines of
curvature in Lorentz--Minkowski 3-space $\mathbb{L}^{3}$ and indicated that
a line of curvature on\ the spacelike surface $\mathbb{M}$ is planar if and
only if its normal image in the hyperbolic plane $\mathbb{H}^{2}$ is a
planar curve in $\mathbb{L}^{3}$ as well as a regular curve in $\mathbb{H}%
^{2}$ is planar if and only if it has constant geodesic curvature.

Lucas and Ortega-Yag\"{u}es $[7]$ presented the notion of rectifying curve
in the three-dimensional sphere $\mathbb{S}^{3}(r)$ and denoted that a curve 
$\gamma $ in $\mathbb{S}^{3}(r)$ is a rectifying curve if and only if $%
\gamma $ is a geodesic curve of a conical surface as well as the rectifying
developable surface of a unit speed curve $\gamma $ is a conical surface if
and only if $\gamma $ is a rectifying curve.

Theisel and Farin $[8]$ showed how to compute the curvature and geodesic
curvature of characteristic curves on surfaces, such as contour lines,
reflection lines, lines of curvature, asymptotic curves, and isophote
curves. The conditions of also being characteristic curves of isophotes are
studied in $[2,3]$.

This paper is organized as follows. Section 2 is devoted to some basic
concepts with regard to theory of curves and surfaces in $\mathbb{E}^{3}$
and particular curves on surfaces. In section 3, both the coefficients of
the first and second fundamental forms of developable surfaces and the
Gaussian curvatures of normal and binormal surfaces are obtained. Finally,
in section 4, the main theorems for particular curves on developable
surfaces are given.

\section{Preliminaries}

We shortly give some basic concepts concerning theory of curves and surfaces
in $\mathbb{E}^{3}$ that will use in the subsequent section. Let $\alpha
:I\subset 
\mathbb{R}
\longrightarrow \mathbb{E}^{3}$ be a curve with $\left \Vert \alpha
^{^{\prime }}(s)\right \Vert =1$, where $s$ is the arc-length parameter of $%
\alpha $ and $\alpha ^{^{\prime }}(s)=\dfrac{d\alpha }{ds}(s)$. The function 
$\kappa :I\longrightarrow 
\mathbb{R}
$, $\kappa (s)=\left \Vert \alpha ^{^{\prime \prime }}(s)\right \Vert $ is
defined as the curvature of $\alpha $. For $\kappa >0$, the Frenet frame
along the curve $\alpha $ and the corresponding derivative formulas (Frenet
formulas) are as follows.%
\begin{gather*}
T(s)=\alpha ^{^{\prime }}(s), \\
N(s)=\frac{\alpha ^{^{\prime \prime }}(s)}{\left \Vert \alpha ^{^{\prime
\prime }}(s)\right \Vert }, \\
B(s)=T(s)\times N(s),
\end{gather*}%
where $T$, $N$, and $B$ are the tangent, the principal normal, and the
binormal of $\alpha $, respectively.%
\begin{gather*}
T^{^{\prime }}(s)=\kappa (s)N(s), \\
N^{^{\prime }}(s)=-\kappa (s)T(s)+\tau (s)B(s), \\
B^{^{\prime }}(s)=-\tau (s)N(s),
\end{gather*}%
where the function $\tau :I\longrightarrow 
\mathbb{R}
$, $\tau (s)=\dfrac{\left \langle \alpha ^{^{\prime }}(s)\times \alpha
^{^{\prime \prime }}(s),\alpha ^{^{\prime \prime \prime }}(s)\right \rangle 
}{\kappa ^{2}(s)}$ is the torsion of $\alpha $; "$\left \langle
,\right
\rangle $" is the standart inner product, and "$\times $" is the
cross product on $%
\mathbb{R}
^{3}$.

Let $\mathbb{M}$ be a regular surface and $\alpha :I\subset 
\mathbb{R}
\longrightarrow \mathbb{M}$ be a unit-speed curve. Some particular curves
lying on $\mathbb{M}$ are characterized as follows.\newline
a) A curve $\alpha $ lying on $\mathbb{M}$ is a geodesic curve if and only
if the acceleration vector $\alpha ^{^{\prime \prime }}$ is normal to $%
\mathbb{M}$, in other words,%
\begin{equation*}
U\times \alpha ^{^{\prime \prime }}=0.
\end{equation*}%
b) A curve $\alpha $ lying on $\mathbb{M}$ is an asymptotic curve if and
only if the acceleration vector $\alpha ^{^{\prime \prime }}$ is tangent to $%
\mathbb{M}$, that is,%
\begin{equation*}
\left \langle U,\alpha ^{^{\prime \prime }}\right \rangle =0.
\end{equation*}%
c) A curve $\alpha $ lying on $\mathbb{M}$ is a line of curvature if and
only if $S(T)$ and $T$ are linearly dependent, where $T$ is the tangent of $%
\alpha $, $U$ is the unit normal, and $S$ is the shape operator of $\mathbb{M%
}$.

A ruled surface in $%
\mathbb{R}
^{3}$ is (locally) the map%
\begin{equation*}
F_{(\gamma ,\delta )}(t,u)=\gamma (t)+u\delta (t),
\end{equation*}%
where $\gamma :I\longrightarrow 
\mathbb{R}
^{3}$, $\delta :I\longrightarrow 
\mathbb{R}
^{3}\backslash \{0\}$ are smooth mappings and $I$ is an open interval or a
unit circle $\mathbb{S}^{1}$ $[5]$, where $\gamma $ and $\delta $ are called
the base and generator (director) curves, respectively. For $\left \Vert
\delta (t)\right \Vert =1$, the Gaussian curvature of $F_{(\gamma ,\delta )}$
is $[4]$%
\begin{equation}
\mathbb{K}=\mathbf{-}\frac{[\det (\gamma ^{^{\prime }}(t),\delta (t),\delta
^{^{\prime }}(t))]^{2}}{(EG-F^{2})^{2}},  \tag{2.1}
\end{equation}%
where $E=E(t,u)=\left \Vert \gamma ^{^{\prime }}(t)+u\delta ^{^{\prime
}}(t)\right \Vert ,$ $F=F(t,u)=$ $\left \langle \gamma ^{^{\prime
}}(t),\delta (t)\right \rangle $, and $G=G(t,u)=1$ are the coefficients of
the first fundamental form of $F_{(\gamma ,\delta )}$.

\begin{theorem}[{$[1]$}]
A necessary and sufficient condition for the parameter curves of a surface
to be lines of curvature in a neighborhood of a nonumbilical point is that $%
F=f=0$, where $F$ and $f$ are the respective the first and second
fundamental coefficients.
\end{theorem}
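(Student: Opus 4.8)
The plan is to work in a local parametrization $\mathbf{x}(u,v)$ of $\mathbb{M}$ around the nonumbilical point and to exploit the classical differential equation whose solutions are the lines of curvature. Writing $E,F,G$ and $e,f,g$ for the coefficients of the first and second fundamental forms, a direction $(du,dv)$ is principal precisely when
\[
(Ef-Fe)\,du^{2}+(Eg-Ge)\,du\,dv+(Fg-Gf)\,dv^{2}=0,
\]
which follows from the Rodrigues criterion $dU=\lambda\,d\mathbf{x}$ together with the Weingarten equations. The parameter curves correspond to the directions $(du,dv)=(1,0)$ and $(0,1)$, so everything reduces to reading off this quadratic along those two directions.

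For sufficiency I would assume $F=f=0$ and substitute into the equation above, which collapses to $(Eg-Ge)\,du\,dv=0$. At a nonumbilical point the principal curvatures $e/E$ and $g/G$ are distinct, so $Eg-Ge\neq 0$; hence $du\,dv=0$, that is, either $du=0$ or $dv=0$, and these are exactly the parameter curves. Thus the parameter curves are lines of curvature.

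For necessity I would substitute the parameter directions $(1,0)$ and $(0,1)$ into the equation, obtaining the two conditions $Ef-Fe=0$ and $Fg-Gf=0$. The delicate step, and the place where the nonumbilical hypothesis is indispensable, is to deduce $F=f=0$ from these two relations alone. Here the plan is to use that the lines of curvature through a nonumbilical point are mutually orthogonal; since the parameter curves now point along the two principal directions, their tangents $\mathbf{x}_{u}$ and $\mathbf{x}_{v}$ are orthogonal, giving $F=\langle \mathbf{x}_{u},\mathbf{x}_{v}\rangle=0$ directly. Feeding $F=0$ back into $Ef-Fe=0$ yields $Ef=0$, and regularity forces $E=\|\mathbf{x}_{u}\|^{2}>0$, so $f=0$.

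As a fallback that avoids quoting orthogonality, one can argue algebraically: if $f\neq 0$ the two relations force $E=Fe/f$ and $G=Fg/f$, and a short computation of the diagonal entries of the Weingarten matrix then shows the two principal curvatures coincide, contradicting nonumbilicity; hence $f=0$, whereupon $F=0$ follows as well (otherwise $e=g=0$ and the point would be planar, hence umbilical). I expect the main obstacle to be precisely this necessity direction, since the naive expectation that the two vanishing conditions immediately give $F=f=0$ is false without the nonumbilical assumption.
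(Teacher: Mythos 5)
Your proof is correct. The paper offers no proof of this statement at all — it is quoted from do Carmo $[1]$ — and your argument (the quadratic equation of principal directions; sufficiency from $Eg-Ge\neq 0$ at nonumbilical points; necessity from orthogonality of the two distinct principal directions, which gives $F=0$, and then $Ef=0$ with $E>0$ forcing $f=0$) is precisely the standard proof in that source, including your correct identification that the nonumbilical hypothesis is what rescues the necessity direction.
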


\begin{theorem}[The Lancret Theorem]
Let $\alpha $ be a unit-speed space curve with $\kappa (s)\neq 0$. Then $%
\alpha $ is a general helix if and only if $(\dfrac{\tau }{\kappa })(s)$ is
a constant function.
\end{theorem}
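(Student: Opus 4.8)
The plan is to use the defining property of a general helix---that its unit tangent $T$ makes a constant angle with a fixed direction $\mathbf{d}$ in $\mathbb{E}^{3}$---together with the Frenet formulas recorded above. Throughout I would write $\theta$ for this constant angle, so that $\langle T,\mathbf{d}\rangle =\cos \theta $ is the analytic form of the helix condition. The proof then splits into the two implications of the stated equivalence.

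For the forward implication, I would assume $\alpha $ is a general helix with fixed unit axis $\mathbf{d}$ and differentiate the relation $\langle T,\mathbf{d}\rangle =\cos \theta $. Using $T^{^{\prime }}=\kappa N$ this gives $\kappa \langle N,\mathbf{d}\rangle =0$, and since $\kappa \neq 0$ we obtain $\langle N,\mathbf{d}\rangle =0$. Hence $\mathbf{d}$ lies in the plane spanned by $T$ and $B$, so I may write $\mathbf{d}=\cos \theta \,T+\sin \theta \,B$. Differentiating $\langle N,\mathbf{d}\rangle =0$ and substituting $N^{^{\prime }}=-\kappa T+\tau B$ yields $-\kappa \cos \theta +\tau \sin \theta =0$, that is, $\tau /\kappa =\cot \theta $, which is constant.

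For the converse, I would begin from the hypothesis $(\tau /\kappa )(s)=c$ constant, choose $\theta $ with $\cot \theta =c$, and define the candidate axis $\mathbf{d}(s)=\cos \theta \,T(s)+\sin \theta \,B(s)$. Differentiating and applying the Frenet formulas gives $\mathbf{d}^{^{\prime }}=(\kappa \cos \theta -\tau \sin \theta )N=\kappa (\cos \theta -(\tau /\kappa )\sin \theta )N$; since $(\tau /\kappa )\sin \theta =c\sin \theta =\cos \theta $, the coefficient vanishes and $\mathbf{d}$ is a constant vector. As $\langle T,\mathbf{d}\rangle =\cos \theta $ is then constant, $\alpha $ is a general helix with axis $\mathbf{d}$, completing the equivalence.

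The computations are entirely routine once the Frenet frame is in play, so I do not expect a genuine technical obstacle. The only point needing care is conceptual rather than computational: fixing at the outset the precise definition of \emph{general helix} as the constant-angle condition, and recognizing that the fixed axis is forced to lie in the rectifying plane spanned by $T$ and $B$. That observation is exactly what makes the explicit choice of $\mathbf{d}$ in the converse natural rather than an unmotivated guess.
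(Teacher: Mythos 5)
Your proof is correct, and it is the classical argument for Lancret's theorem: differentiate the constant-angle condition to force the axis into the rectifying plane, then read off $\tau/\kappa=\cot\theta$; conversely, exhibit the axis $\mathbf{d}=\cos\theta\,T+\sin\theta\,B$ and check $\mathbf{d}^{\prime}=0$. Note that the paper itself gives no proof of this statement --- it is quoted in the Preliminaries as a known classical result --- so there is nothing in the paper to compare your argument against. One small point you should make explicit: writing $\tau/\kappa=\cot\theta$ in the forward direction divides by $\sin\theta$, and the degenerate case $\sin\theta=0$ must be excluded; it cannot occur, since $\mathbf{d}=\pm T$ would give $T^{\prime}=\kappa N=0$, contradicting $\kappa\neq 0$.
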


\begin{theorem}[{$[5]$}]
A unit-speed curve $\alpha :I\subset 
\mathbb{R}
\longrightarrow \mathbb{E}^{3}$ with $\kappa (s)\neq 0$ is a slant helix if
and only if%
\begin{equation*}
\sigma (s)=\mp \left( \frac{\kappa ^{2}}{(\kappa ^{2}+\tau ^{2})^{3/2}}(%
\frac{\tau }{\kappa })^{^{\prime }}\right) (s)
\end{equation*}%
is a constant function.
\end{theorem}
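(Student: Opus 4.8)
The plan is to argue directly from the defining property of a slant helix introduced in $[5]$: $\alpha$ is a slant helix precisely when its principal normal $N$ makes a constant angle $\theta$ with some fixed unit direction $d$, i.e. $\langle N,d\rangle =\cos \theta $ is constant. It is worth recording the geometric meaning first, since it ties the statement to the theme of the paper: up to sign, $\sigma $ is exactly the geodesic curvature of the spherical image $s\mapsto N(s)$ on the unit sphere. Indeed, from $N^{\prime }=-\kappa T+\tau B$ and $N^{\prime \prime }=-\kappa ^{\prime }T-(\kappa ^{2}+\tau ^{2})N+\tau ^{\prime }B$ one computes $\det (N,N^{\prime },N^{\prime \prime })=\kappa \tau ^{\prime }-\kappa ^{\prime }\tau =\kappa ^{2}(\tau /\kappa )^{\prime }$ and $\left\Vert N^{\prime }\right\Vert ^{3}=(\kappa ^{2}+\tau ^{2})^{3/2}$, so $\sigma $ agrees with $\det (N,N^{\prime },N^{\prime \prime })/\left\Vert N^{\prime }\right\Vert ^{3}$. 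Thus the claim is the assertion that this spherical image is a circle (a curve of constant geodesic curvature) if and only if $N$ lies at a fixed angle from an axis.

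First I would prove the forward implication. Assuming $\langle N,d\rangle =\cos \theta $ is constant, I expand $d=\langle T,d\rangle T+\cos \theta \,N+\langle B,d\rangle B$ and differentiate. Using the Frenet formulas, $\langle N^{\prime },d\rangle =0$ gives $-\kappa \langle T,d\rangle +\tau \langle B,d\rangle =0$, so I may write $\langle T,d\rangle =\mu \tau $ and $\langle B,d\rangle =\mu \kappa $ for a scalar function $\mu $; the normalization $\left\Vert d\right\Vert =1$ then forces $\mu =\pm \sin \theta /\sqrt{\kappa ^{2}+\tau ^{2}}$. Differentiating $\langle T,d\rangle =\mu \tau $ and $\langle B,d\rangle =\mu \kappa $ and again applying the Frenet formulas produces two scalar equations in $\mu ^{\prime }$ and $\mu $; eliminating $\mu ^{\prime }$ (multiply one by $\kappa $, the other by $\tau $, and subtract) yields $(\kappa ^{2}+\tau ^{2})\cos \theta =\mu \,\kappa ^{2}(\tau /\kappa )^{\prime }$. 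Substituting the value of $\mu $ gives $\kappa ^{2}(\tau /\kappa )^{\prime }/(\kappa ^{2}+\tau ^{2})^{3/2}=\pm \cot \theta $; since $\sigma $ equals this expression up to sign, $\sigma $ is a constant function.

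For the converse I would reverse the computation, which I expect to be the main obstacle, since it requires exhibiting the fixed direction and verifying it is genuinely constant. Given $\sigma $ constant, I choose $\theta $ with $\cos \theta =\mp \sigma \sin \theta $ (the degenerate value $\sigma =0$ corresponds to $(\tau /\kappa )^{\prime }=0$, i.e. a general helix by the Lancret Theorem, with $\theta =\pi /2$), set $\mu =\sin \theta /\sqrt{\kappa ^{2}+\tau ^{2}}$, and define $d=\mu \tau \,T+\cos \theta \,N+\mu \kappa \,B$. The task is then to check $d^{\prime }=0$. Differentiating and collecting Frenet components, the $N$-component cancels identically, while the $T$- and $B$-components vanish if and only if $(\mu \tau )^{\prime }=\kappa \cos \theta $ and $(\mu \kappa )^{\prime }=-\tau \cos \theta $. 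The crucial calculation is that, after writing $\omega =\sqrt{\kappa ^{2}+\tau ^{2}}$ and using $\omega ^{\prime }=(\kappa \kappa ^{\prime }+\tau \tau ^{\prime })/\omega $, both derivatives collapse to multiples of $\kappa ^{2}(\tau /\kappa )^{\prime }/\omega ^{3}=\mp \sigma $; the constancy of $\sigma $, together with the choice $\cos \theta =\mp \sigma \sin \theta $, is precisely what makes these two identities hold. Once $d^{\prime }=0$ is established, $d$ is a fixed unit vector with $\langle N,d\rangle =\cos \theta $ constant, so $\alpha $ is a slant helix, completing the equivalence.
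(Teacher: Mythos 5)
The paper itself offers no proof of this statement: it is quoted (as a theorem of Izumiya and Takeuchi $[5]$) and used as a known tool, so there is no in-paper argument to compare yours against; what follows is an assessment of your proof on its own merits. Both directions of your argument are correct. In the forward direction the elimination step checks out: writing $\langle T,d\rangle=\mu\tau$, $\langle B,d\rangle=\mu\kappa$ (legitimate since $\kappa\neq 0$), one has $\kappa(\mu\tau)'-\tau(\mu\kappa)'=\mu(\kappa\tau'-\kappa'\tau)=\mu\kappa^{2}(\tau/\kappa)'$, while the Frenet formulas give $\kappa(\mu\tau)'-\tau(\mu\kappa)'=(\kappa^{2}+\tau^{2})\cos\theta$; substituting $\mu=\pm\sin\theta/\sqrt{\kappa^{2}+\tau^{2}}$ yields $\sigma=\mp\cot\theta$, a constant. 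In the converse, the two identities $(\mu\tau)'=\kappa\cos\theta$ and $(\mu\kappa)'=-\tau\cos\theta$ do reduce, with $\omega=\sqrt{\kappa^{2}+\tau^{2}}$, to the relation $\kappa^{2}(\tau/\kappa)'/\omega^{3}=\mp\sigma$ combined with your choice $\cos\theta=\mp\sigma\sin\theta$, so $d'=0$ and $d$ is a genuine fixed axis with $\langle N,d\rangle=\cos\theta$. Two small points you should make explicit: (i) in the forward direction $\sin\theta\neq 0$, since $\sin\theta=0$ would force $d=\pm N$ and hence $N'=-\kappa T+\tau B=0$, contradicting $\kappa\neq 0$; (ii) the pointwise relation $\mu=\pm\sin\theta/\omega$ a priori allows the sign to flip, but $\mu$ is continuous and $\omega>0$, so the sign is constant on the interval and the subsequent differentiation with a single choice of sign is legitimate. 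As for the route: your argument — constructing the axis $d$ by hand and verifying $d'=0$ — is the elementary, self-contained one, whereas the proof in $[5]$ runs through exactly the observation you make in your opening remark, namely that $\sigma$ is the geodesic curvature of the principal normal indicatrix $s\mapsto N(s)$ in $\mathbb{S}^{2}$, together with the fact that a spherical curve has constant geodesic curvature precisely when it is part of a circle, which is in turn equivalent to $N$ making a constant angle with a fixed direction. Your version avoids invoking that characterization of spherical circles at the cost of carrying out the computation explicitly; the two proofs are essentially the same calculation organized in different orders.
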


\section{Developable, Normal, and Binormal Surfaces}

In this section, we introduce developable, normal and binormal surfaces
associated to a space curve and after that obtain the coefficients of the
first and second fundamental forms of them, respectively. The non-singular
ruled surfaces whose the Gaussian curvature vanish are called developable
surfaces. Now, we firstly get the unit normal $U$ and the acceleration
vectors of the parameter curves of developable surfaces.\newline
Let%
\begin{equation*}
K(s,v)=\alpha (s)+v\delta (s)
\end{equation*}%
be a ruled surface with $\left \Vert \delta (s)\right \Vert =1$. Then, we
have%
\begin{eqnarray*}
K_{s} &=&T+v\delta ^{^{\prime }}, \\
K_{v} &=&\delta ,
\end{eqnarray*}%
\begin{equation*}
U=\frac{K_{s}\times K_{v}}{\left \Vert K_{s}\times K_{v}\right \Vert }=\frac{%
1}{\left \Vert (T+v\delta ^{^{\prime }})\times \delta \right \Vert }%
[(T+v\delta ^{^{\prime }})\times \delta ],
\end{equation*}%
\begin{eqnarray}
K_{vs} &=&\delta ^{^{\prime }},  \notag \\
K_{ss} &=&\kappa N+v\delta ^{^{^{\prime \prime }}},  \notag \\
K_{vv} &=&0,  \TCItag{3.1}
\end{eqnarray}%
where $T$ is the tangent, $N$ is the principal normal, and $\kappa $ is the
curvature of $\alpha $. From Eq.(2.1), the ruled surface $K(s,v)$ is
developable if and only if%
\begin{equation*}
\det (T,\delta ,\delta ^{^{\prime }})=\left \langle T\times \delta ,\delta
^{^{\prime }}\right \rangle =0.
\end{equation*}%
By differentiating the last equation with respect to $s$, we get 
\begin{subequations}
\begin{gather}
\left \langle T^{^{\prime }}\times \delta ,\delta ^{^{\prime }}\right
\rangle +\left \langle T\times \delta ^{^{\prime }},\delta ^{^{\prime
}}\right \rangle +\left \langle T\times \delta ,\delta ^{^{^{\prime \prime
}}}\right \rangle =0  \notag \\
\left \langle T\times \delta ,\delta ^{^{^{\prime \prime }}}\right \rangle
=-\kappa \left \langle N\times \delta ,\delta ^{^{\prime }}\right \rangle . 
\tag{3.2}
\end{gather}%
Secondly, we obtain the coefficients of the first and second fundamental
forms of special developable surfaces.\newline
Let $\alpha :I\subset R\longrightarrow \mathbb{E}^{3}$ be a unit-speed curve
with $\kappa \neq 0$ and let $\{T,N,B,\kappa ,\tau \}$ be Frenet apparatus
of $\alpha $.\newline
\textbf{1)}\textit{\ }The ruled surface 
\end{subequations}
\begin{equation*}
K(s,v)=\alpha (s)+vN(s)
\end{equation*}%
is called the principal normal surface of $\alpha $. The partial derivatives
of $K(s,v)$ with respect to $s$ and $v$ are as follows.%
\begin{gather*}
K_{s}=(1-v\kappa )T+v\tau B, \\
K_{v}=N, \\
K_{ss}=-v\kappa ^{^{\prime }}T+[\kappa -v(\kappa ^{2}+\tau ^{2})]N+v\tau
^{^{\prime }}B, \\
K_{vs}=-\kappa T+\tau B, \\
K_{vv}=0.
\end{gather*}%
Thus, the unit normal $U$ and the Gaussian curvature $\mathbb{K}$ of $K(s,v)$
are obtained as follows.%
\begin{equation*}
U=\frac{K_{s}\times K_{v}}{\left \Vert K_{s}\times K_{v}\right \Vert }=\frac{%
1}{\sqrt{(1-v\kappa )^{2}+v^{2}\tau ^{2}}}[-v\tau T+(1-v\kappa )B],
\end{equation*}%
\begin{subequations}
\begin{align*}
E& =\left \langle K_{s},K_{s}\right \rangle =(1-v\kappa )^{2}+v^{2}\tau ^{2},
\\
F& =\left \langle K_{s},K_{v}\right \rangle =0, \\
G& =\left \langle K_{v},K_{v}\right \rangle =1,
\end{align*}%
\begin{eqnarray}
e &=&\left \langle U,K_{ss}\right \rangle =\frac{v\tau ^{^{\prime
}}+v^{2}\tau ^{2}(\dfrac{\kappa }{\tau })^{^{\prime }}}{\sqrt{(1-v\kappa
)^{2}+v^{2}\tau ^{2}}},  \TCItag{3.3} \\
f &=&\left \langle U,K_{vs}\right \rangle =\frac{\tau }{\sqrt{(1-v\kappa
)^{2}+v^{2}\tau ^{2}}},  \notag \\
g &=&\left \langle U,K_{vv}\right \rangle =0,  \notag
\end{eqnarray}%
\end{subequations}
\begin{equation*}
\mathbb{K}=\frac{eg-f^{2}}{EG-F^{2}}=-\frac{\tau ^{2}}{[(1-v\kappa
)^{2}+v^{2}\tau ^{2}]^{2}},
\end{equation*}%
\textbf{2)}\textit{\ }The ruled surface%
\begin{equation*}
K(s,v)=\alpha (s)+vB(s)
\end{equation*}%
is called the binormal surface of $\alpha $. The partial derivatives of $%
K(s,v)$ with respect to $s$ and $v$ are as follows.%
\begin{gather*}
K_{s}=T-v\tau N, \\
K_{v}=B, \\
K_{ss}=v\kappa \tau T+(\kappa -v\tau ^{^{\prime }})N-v\tau ^{2}B, \\
K_{vs}=-\tau N, \\
K_{vv}=0.
\end{gather*}%
Thus, the unit normal $U$ and the Gaussian curvature $\mathbb{K}$ of $K(s,v)$
are obtained as follows.%
\begin{equation*}
U=\frac{K_{s}\times K_{v}}{\left \Vert K_{s}\times K_{v}\right \Vert }=\frac{%
1}{\sqrt{1+v^{2}\tau ^{2}}}[-v\tau T-N],
\end{equation*}%
\begin{eqnarray*}
E &=&\left \langle K_{s},K_{s}\right \rangle =1+v^{2}\tau ^{2}, \\
F &=&\left \langle K_{s},K_{v}\right \rangle =0, \\
G &=&\left \langle K_{v},K_{v}\right \rangle =1,
\end{eqnarray*}%
\begin{eqnarray}
e &=&\left \langle U,K_{ss}\right \rangle =\frac{-v^{2}\kappa \tau
^{2}-\kappa +v\tau ^{^{\prime }}}{\sqrt{1+v^{2}\tau ^{2}}},  \TCItag{3.4} \\
f &=&\left \langle U,K_{vs}\right \rangle =\frac{\tau }{\sqrt{1+v^{2}\tau
^{2}}},  \notag \\
g &=&\left \langle U,K_{vv}\right \rangle =0,  \notag
\end{eqnarray}%
\begin{equation*}
\mathbb{K}=\frac{eg-f^{2}}{EG-F^{2}}=-\frac{\tau ^{2}}{[1+v^{2}\tau ^{2}]^{2}%
}.
\end{equation*}%
As we can see above, the normal and binormal surfaces of $\alpha $ are
developable if and only if the base curve $\alpha $ is a plane curve.\newline
\textbf{3)}\textit{\ }The ruled surface%
\begin{equation*}
K(s,v)=\alpha (s)+vT(s)
\end{equation*}%
is called the tangent developable surface of $\alpha $. The partial
derivatives of $K(s,v)$ with respect to $s$ and $v$ are as follows.%
\begin{gather*}
K_{s}=T+v\kappa N, \\
K_{v}=T, \\
K_{ss}=-v\kappa ^{2}T+(\kappa +v\kappa ^{^{\prime }})N+v\kappa \tau B, \\
K_{vs}=\kappa N, \\
K_{vv}=0.
\end{gather*}%
Thus, the unit normal $U$ and the coefficients of the first and second
fundamental forms of $K(s,v)$ are obtained as follows.%
\begin{equation*}
U=\frac{K_{s}\times K_{v}}{\left \Vert K_{s}\times K_{v}\right \Vert }=\pm B,
\end{equation*}%
\begin{eqnarray*}
E &=&\left \langle K_{s},K_{s}\right \rangle =1+v^{2}\kappa ^{2}, \\
F &=&\left \langle K_{s},K_{v}\right \rangle =1, \\
G &=&\left \langle K_{v},K_{v}\right \rangle =1,
\end{eqnarray*}%
\begin{eqnarray}
e &=&\left \langle U,K_{ss}\right \rangle =\pm v\kappa \tau ,  \TCItag{3.5}
\\
f &=&\left \langle U,K_{vs}\right \rangle =0,  \notag \\
g &=&\left \langle U,K_{vv}\right \rangle =0.  \notag
\end{eqnarray}%
\textbf{4)}\textit{\ }The ruled surface%
\begin{equation*}
K(s,v)=B(s)+vT(s)
\end{equation*}%
is called the Darboux developable surface of $\alpha $. The partial
derivatives of $K(s,v)$ with respect to $s$ and $v$ are as follows.%
\begin{gather*}
K_{s}=(v\kappa -\tau )N, \\
K_{v}=T, \\
K_{ss}=-\kappa (v\kappa -\tau )T+(v\kappa -\tau )^{^{\prime }}N+\tau
(v\kappa -\tau )B, \\
K_{vs}=\kappa N, \\
K_{vv}=0.
\end{gather*}%
Thus, the unit normal $U$ and the coefficients of the first and second
fundamental forms of $K(s,v)$ are obtained as follows.%
\begin{equation*}
U=\frac{K_{s}\times K_{v}}{\left \Vert K_{s}\times K_{v}\right \Vert }=\pm B,
\end{equation*}%
\begin{eqnarray*}
E &=&\left \langle K_{s},K_{s}\right \rangle =(v\kappa -\tau )^{2}, \\
F &=&\left \langle K_{s},K_{v}\right \rangle =0, \\
G &=&\left \langle K_{v},K_{v}\right \rangle =1,
\end{eqnarray*}%
\begin{eqnarray}
e &=&\left \langle U,K_{ss}\right \rangle =\pm \tau (v\kappa -\tau ), 
\TCItag{3.6} \\
f &=&\left \langle U,K_{vs}\right \rangle =0,  \notag \\
g &=&\left \langle U,K_{vv}\right \rangle =0.  \notag
\end{eqnarray}%
\textbf{5)} The ruled surface%
\begin{equation*}
K(s,v)=\alpha (s)+v\overset{\sim }{D}(s)
\end{equation*}%
is called the rectifying developable surface of $\alpha $, where%
\begin{equation*}
\overset{\sim }{D}(s)=(\dfrac{\tau }{\kappa })(s)T(s)+B(s)
\end{equation*}%
is the modified Darboux vector field of $\alpha $. The partial derivatives
of $K(s,v)$ with respect to $s$ and $v$ are as follows.%
\begin{gather*}
K_{s}=(1+v(\dfrac{\tau }{\kappa })^{^{\prime }})T, \\
K_{v}=\dfrac{\tau }{\kappa }T+B, \\
K_{ss}=v(\dfrac{\tau }{\kappa })^{^{^{\prime \prime }}}T+\kappa (1+v(\dfrac{%
\tau }{\kappa })^{^{\prime }})N, \\
K_{vs}=(\dfrac{\tau }{\kappa })^{^{\prime }}T, \\
K_{vv}=0.
\end{gather*}%
Thus, the unit normal $U$ and the coefficients of the first and second
fundamental forms of $K(s,v)$ are obtained as follows.%
\begin{equation*}
U=\frac{K_{s}\times K_{v}}{\left \Vert K_{s}\times K_{v}\right \Vert }=\pm N,
\end{equation*}%
\begin{eqnarray*}
E &=&\left \langle K_{s},K_{s}\right \rangle =(1+v(\dfrac{\tau }{\kappa }%
)^{^{\prime }})^{2}, \\
F &=&\left \langle K_{s},K_{v}\right \rangle =\dfrac{\tau }{\kappa }(1+v(%
\dfrac{\tau }{\kappa })^{^{\prime }}), \\
G &=&\left \langle K_{v},K_{v}\right \rangle =1+\dfrac{\tau ^{2}}{\kappa ^{2}%
},
\end{eqnarray*}%
\begin{eqnarray}
e &=&\left \langle U,K_{ss}\right \rangle =\pm \kappa (1+v(\dfrac{\tau }{%
\kappa })^{^{\prime }}),  \TCItag{3.7} \\
f &=&\left \langle U,K_{vs}\right \rangle =0,  \notag \\
g &=&\left \langle U,K_{vv}\right \rangle =0.  \notag
\end{eqnarray}%
\textbf{6)} The ruled surface%
\begin{equation*}
K(s,v)=\overset{-}{D}(s)+vN(s)
\end{equation*}%
is called the tangential Darboux developable surface of $\alpha $, where%
\begin{equation*}
\overset{-}{D}(s)=\dfrac{1}{\sqrt{\kappa ^{2}(s)+\tau ^{2}(s)}}(\tau
(s)T(s)+\kappa (s)B(s))
\end{equation*}%
is the unit Darboux vector field of $\alpha $. The partial derivatives of $%
K(s,v)$ with respect to $s$ and $v$ are as follows.%
\begin{gather*}
K_{s}=(v-\sigma (s))N^{^{\prime }}, \\
K_{v}=N, \\
K_{ss}=-\sigma ^{^{\prime }}(s)N^{^{\prime }}+(v-\sigma (s))N^{^{\prime
\prime }}, \\
K_{vs}=N^{^{\prime }}, \\
K_{vv}=0,
\end{gather*}%
where $N^{^{\prime \prime }}=-\kappa ^{^{\prime }}T-(\kappa ^{2}+\tau
^{2})N+\tau ^{^{\prime }}B$. Thus, the unit normal $U$ and the coefficients
of the first and second fundamental forms of $K(s,v)$ are obtained as
follows.%
\begin{equation*}
U=\frac{K_{s}\times K_{v}}{\left \Vert K_{s}\times K_{v}\right \Vert }=\pm 
\overset{-}{D}
\end{equation*}%
\begin{gather*}
E=\left \langle K_{s},K_{s}\right \rangle =(v-\sigma (s))^{2}(\kappa
^{2}+\tau ^{2}), \\
F=\left \langle K_{s},K_{v}\right \rangle =0, \\
G=\left \langle K_{v},K_{v}\right \rangle =1,
\end{gather*}%
\begin{gather}
e=\left \langle U,K_{ss}\right \rangle =\pm (\kappa ^{2}+\tau ^{2})\sigma
(s)(v-\sigma (s)),  \tag{3.8} \\
f=\left \langle U,K_{vs}\right \rangle =0,  \notag \\
g=\left \langle U,K_{vv}\right \rangle =0.  \notag
\end{gather}%
From now on, we will investigate the relationship among characteristic
curves of developable surfaces.

\section{Main Results}

We give main theorems that characterize the parameter curves are also
particular curves on $K(s,v)$ such as geodesic curves, asymptotic curves or
lines of curvature.

\begin{theorem}
Let $K(s,v)=\alpha (s)+v\delta (s)$\ be a developable surface with $%
\left
\Vert \delta (s)\right \Vert =1$. Then, the following expressions are
satisfied for the parameter curves of $K(s,v)$.\newline
i) $s-$parameter curves are also asymptotic curves if and only if%
\begin{equation*}
\frac{\det (\delta ,\delta ^{^{\prime }},\delta ^{^{^{\prime \prime }}})}{%
\det (T,\delta ,N)}=\frac{\kappa }{v^{2}},
\end{equation*}%
where $T$ is the tangent, $N$ is the principal normal, and $\kappa $ is the
curvature of $\alpha $.\newline
ii) $s-$parameter curves cannot also be geodesic curves.\newline
iii) $v-$parameter curves are straight lines.\newline
iv) The parameter curves are also lines of curvature if and only if the
tangent of the base curve $\alpha $ is perpendicular to the director curve $%
\delta $.
\end{theorem}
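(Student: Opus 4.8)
The plan is to read the first and second fundamental coefficients of the general developable $K(s,v)=\alpha(s)+v\delta(s)$ off the derivatives already recorded in Section 3, and then feed each sub-statement into the matching characterization from Section 2 and Theorem 1. Using $\|\delta\|=1$ (so $\langle\delta,\delta'\rangle=0$) one gets $G=1$, $F=\langle K_s,K_v\rangle=\langle T,\delta\rangle$, $g=\langle U,K_{vv}\rangle=0$, and, crucially, $f=\langle U,K_{vs}\rangle=\langle U,\delta'\rangle$. Since $U$ is a positive multiple of $(T+v\delta')\times\delta$, this last quantity is a multiple of $\langle T\times\delta,\delta'\rangle+v\langle\delta'\times\delta,\delta'\rangle=\det(T,\delta,\delta')+0$, which vanishes exactly because $K$ is developable. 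Hence $f\equiv 0$ on every developable surface; I would isolate this as a lemma-level observation, since the remaining parts lean on it.

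Part (iii) is immediate: for fixed $s$ the map $v\mapsto\alpha(s)+v\delta(s)$ is affine, equivalently $K_{vv}=0$, so the $v$-curves are straight lines. For part (i) I would use that the $s$-curve has velocity $K_s$ and normal curvature $e/E$, so it is asymptotic iff $e=\langle U,K_{ss}\rangle=0$. Expanding $e$ as a multiple of $\langle(T+v\delta')\times\delta,\kappa N+v\delta''\rangle$ produces the four determinants $\kappa\det(T,\delta,N)$, $\kappa v\det(\delta',\delta,N)$, $v\det(T,\delta,\delta'')$ and $v^{2}\det(\delta',\delta,\delta'')$. The step that makes it collapse is the differentiated developability relation (3.2), $\langle T\times\delta,\delta''\rangle=-\kappa\langle N\times\delta,\delta'\rangle$, which rewrites the $v\det(T,\delta,\delta'')$ term against the others; after regrouping and dividing by $\det(T,\delta,N)$ one reaches the displayed ratio $\det(\delta,\delta',\delta'')/\det(T,\delta,N)=\kappa/v^{2}$.

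Part (iv) is the cleanest consequence of the setup: by Theorem 1 the two families of parameter curves are lines of curvature near a nonumbilical point iff $F=f=0$. Since $f\equiv0$ was already forced by developability, the condition reduces to $F=\langle T,\delta\rangle=0$, that is $T\perp\delta$. I would note that ``nonumbilical'' here merely means the surface is not flat at the point: the second fundamental matrix is $\operatorname{diag}(e,0)$, so the only umbilics are the flat points $e=0$, and away from them Theorem 1 applies verbatim.

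The delicate part, and the one I expect to be the main obstacle, is part (ii). Because the $s$-curves are generally not unit speed, one cannot simply demand $K_{ss}\parallel U$; the correct condition is that the geodesic curvature vanish, i.e. $\langle K_{ss},U\times K_s\rangle=0$, and since $U\times K_s=\tfrac1W(EK_v-FK_s)$ this becomes $E\langle K_{ss},K_v\rangle=F\langle K_{ss},K_s\rangle$ (equivalently $\Gamma^{2}_{11}=0$). I would then compute $\langle K_{ss},K_v\rangle=\kappa\langle N,\delta\rangle-v\|\delta'\|^{2}$ and $\langle K_{ss},K_s\rangle=\tfrac12 E_s$ and try to show this identity is incompatible with $\kappa\neq0$ and developability. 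The genuine difficulty is that the condition is \emph{not} vacuous: for instance the base curve of the rectifying developable (where $\langle N,\delta\rangle=0$ at $v=0$) is a geodesic, so the statement must be read with care about the range of $v$ and the admissible $\delta$. The crux of a correct proof is therefore to pin down the precise nondegeneracy being assumed and to derive the contradiction there, using that the rulings ($v$-curves) are already geodesics so that a second geodesic family would over-constrain the flat metric $\operatorname{diag}(E,1)$.
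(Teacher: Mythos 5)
Your parts (i), (iii) and (iv) are, modulo presentation, the paper's own proofs: (iii) is the same observation that $K_{vv}=0$ makes both $U\times K_{vv}$ and $\langle U,K_{vv}\rangle$ vanish; (iv) is the same reduction---developability forces $f=0$, so Theorem 1 leaves only $F=\langle T,\delta\rangle =0$---and your extra remarks (isolating $f\equiv 0$ as a lemma, identifying the umbilics of a developable with the points where $e=0$) only make explicit what the paper glosses over; (i) expands $\langle U,K_{ss}\rangle$ into the identical four triple products and invokes Eq.~(3.2). On (i), however, you inherit the paper's sign slip: since $\langle N\times \delta ,\delta ^{\prime }\rangle =-\langle \delta ^{\prime }\times \delta ,N\rangle $, substituting Eq.~(3.2) makes the two cross terms \emph{coincide} rather than cancel, so the asymptotic condition is actually
\begin{equation*}
\kappa \det (T,\delta ,N)+2\kappa v\det (\delta ^{\prime },\delta ,N)+v^{2}\det (\delta ^{\prime },\delta ,\delta ^{\prime \prime })=0,
\end{equation*}
which reduces to the displayed ratio only where $\det (\delta ^{\prime },\delta ,N)=0$ (true for the tangent developable, false for a rectifying developable with $\tau /\kappa $ nonconstant).

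Part (ii) is where you stopped, and stopping was the mathematically correct move. The paper's proof writes $U\times K_{ss}$ as a combination $aT+b\delta +c\delta ^{\prime }$ and argues that, because $T$, $\delta $, $\delta ^{\prime }$ are linearly dependent, the coefficients $a,b,c$ cannot all vanish, hence $U\times K_{ss}\neq 0$. Both steps fail: linear dependence of the vectors says nothing about the scalars $a,b,c$, and it is precisely when the vectors are dependent that $aT+b\delta +c\delta ^{\prime }=0$ can hold with $(a,b,c)\neq (0,0,0)$. Moreover the claim itself is false, exactly as you suspected, and your counterexample can be made even simpler: take $\delta $ a constant unit vector and $\alpha $ a circular helix with axis $\delta $, so that $K$ is a circular cylinder; then $\langle \delta ,N\rangle =0$ and $\delta ^{\prime }=\delta ^{\prime \prime }=0$, so in fact $a=b=c=0$ and every $s$-parameter curve is a unit-speed geodesic, contradicting both the paper's intermediate claim and the theorem. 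The paper even contradicts itself: its later theorem on the rectifying developable, part (ii), asserts that the $s$-parameter curves there \emph{are} geodesics when $v=0$ or $\tau /\kappa $ is linear, and normalizing the director so that $\Vert \delta \Vert =1$ still leaves the base curve a geodesic at $v=0$. So the unfinished state of your part (ii) is not a gap you could have closed: statement (ii) is false as written, and your reformulation via vanishing geodesic curvature, $\langle K_{ss},U\times K_{s}\rangle =0$ (the correct criterion, since the $s$-curves need not have unit speed), is the right starting point for any corrected version that pins down which developables do admit geodesic $s$-parameter curves.
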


\begin{proof}
i) If we use Eq.(3.1) and Eq.(3.2), and then take inner product of $U$ and $%
K_{ss}$, we obtain%
\begin{equation*}
\left \langle U,K_{ss}\right \rangle =\kappa \left \langle T\times \delta
,N\right \rangle +\kappa v\left \langle \delta ^{^{\prime }}\times \delta
,N\right \rangle +v\left \langle T\times \delta ,\delta ^{^{^{\prime \prime
}}}\right \rangle +v^{2}\left \langle \delta ^{^{\prime }}\times \delta
,\delta ^{^{^{\prime \prime }}}\right \rangle =0
\end{equation*}%
\begin{equation*}
\frac{\det (\delta ,\delta ^{^{\prime }},\delta ^{^{^{\prime \prime }}})}{%
\det (T,\delta ,N)}=\frac{\kappa }{v^{2}}.
\end{equation*}%
ii) If we take cross product of $U$ and $K_{ss}$, we get%
\begin{eqnarray*}
U\times K_{ss} &=&-[\kappa \left \langle \delta ,N\right \rangle +v\left
\langle \delta ,\delta ^{^{^{\prime \prime }}}\right \rangle ]T+[\kappa
v\left \langle \delta ^{^{\prime }},N\right \rangle +v\left \langle T,\delta
^{^{^{\prime \prime }}}\right \rangle +v^{2}\left \langle \delta ^{^{\prime
}},\delta ^{^{^{^{\prime \prime }}}}\right \rangle ]\delta \\
&&-[\kappa v\left \langle \delta ,N\right \rangle +v^{2}\left \langle \delta
,\delta ^{^{^{\prime \prime }}}\right \rangle ]\delta ^{^{^{\prime }}}.
\end{eqnarray*}%
Since $T$, $\delta $, and $\delta ^{^{\prime }}$are linearly dependent, the
coefficients of them cannot be zero at the same time, i.e, $s-$parameter
curves cannot also be geodesic curves.

iii) \textit{Since }$U\times K_{vv}=0$ and $\left \langle
U,K_{vv}\right
\rangle =0$, $v-$parameter curves are both geodesic curves
and asymptotic curves, namely, $v-$parameter curves are straight lines.

iv) Since $K(s,v)$ is the developable surface, from Eq.(3.1), we obtain%
\begin{eqnarray*}
f &=&\left \langle U,K_{vs}\right \rangle =\left \langle T\times \delta
,\delta ^{^{\prime }}\right \rangle +v\left \langle \delta ^{^{\prime
}}\times \delta ,\delta ^{^{\prime }}\right \rangle =0, \\
F &=&\left \langle K_{s},K_{v}\right \rangle =\left \langle T,\delta \right
\rangle +v\left \langle \delta ,\delta ^{^{\prime }}\right \rangle .
\end{eqnarray*}%
By the last equation, the parameter curves are also lines of curvature if
and only if $\left \langle T,\delta \right \rangle =0$, that is, the tangent
of the base curve $\alpha $ is perpendicular to the director curve $\delta $.
\end{proof}

\begin{theorem}
Let $K(s,v)$ be the normal surface of $\alpha $. Then, the following
expressions are satisfied for the parameter curves of $K(s,v)$.\newline
\textit{i)} $s-$parameter curves are also asymptotic curves if and only if
the exactly one $s_{0}-$ parameter curve is the base curve, i.e., $%
K(s,0)=\alpha (s)$ or the base curve $\alpha $ is a circular helix.\newline
ii) $s-$parameter curves are also geodesic curves if and only if the base
curve $\alpha $ is a circular helix.\newline
iii) $v-$parameter curves are straight lines.\newline
iv) The parameter curves are also lines of curvature if and only if the base
curve $\alpha $ is a plane curve.
\end{theorem}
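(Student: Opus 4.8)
The plan is to read off each conclusion directly from the fundamental-form data (3.3) already computed for the normal surface $K(s,v)=\alpha(s)+vN(s)$, namely $U=\frac{1}{W}[-v\tau T+(1-v\kappa)B]$ with $W=\sqrt{(1-v\kappa)^2+v^2\tau^2}$, together with $F\equiv 0$, $G\equiv 1$, $e=\frac{v\tau'+v^2\tau^2(\kappa/\tau)'}{W}$, $f=\frac{\tau}{W}$, $g\equiv 0$. Parts (iii) and (iv) then fall out immediately. For (iii), since $K_{vv}=0$ we have both $U\times K_{vv}=0$ and $\langle U,K_{vv}\rangle=0$, so every $v$-parameter curve is simultaneously geodesic and asymptotic, hence a straight line (as is also clear from $v\mapsto\alpha(s_0)+vN(s_0)$). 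For (iv), I would invoke Theorem 1: the parameter curves are lines of curvature near a nonumbilical point if and only if $F=f=0$. As $F\equiv 0$, this reduces to $f=\frac{\tau}{W}=0$, i.e. $\tau\equiv 0$, which is precisely the statement that $\alpha$ is a plane curve; the converse uses Theorem 1 in the same direction.

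For (i), an $s$-parameter curve at fixed $v$ is asymptotic iff its normal curvature vanishes, i.e. $e=0$. Factoring the numerator of $e$ gives $v\,[\tau'+v\tau^2(\kappa/\tau)']=0$, so $v=0$ is always a root: the base curve $K(s,0)=\alpha(s)$ is always asymptotic, and generically it is the only asymptotic $s$-curve. A further (indeed, every) $s$-curve is asymptotic exactly when the bracket vanishes identically in $v$, which forces the two coefficients $\tau'$ and $\tau^2(\kappa/\tau)'$ to vanish separately; $\tau'=0$ gives $\tau$ constant and then $(\kappa/\tau)'=0$ gives $\kappa$ constant, i.e. $\alpha$ is a circular helix, and conversely a circular helix makes $e\equiv 0$ for all $v$. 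This yields the stated dichotomy.

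For (ii), I would use the paper's geodesic test $U\times K_{ss}=0$ with $K_{ss}=-v\kappa'T+[\kappa-v(\kappa^2+\tau^2)]N+v\tau'B$. Expanding the cross product against $U$, which has no $N$-component, splits the condition into three scalar equations. Since the $T$- and $B$-components of $U$, namely $-v\tau$ and $1-v\kappa$, cannot both vanish, the equations in the $T$- and $B$-slots force the $N$-coefficient of $K_{ss}$ to vanish: $\kappa-v(\kappa^2+\tau^2)=0$ identically in $s$, so $\kappa/(\kappa^2+\tau^2)\equiv v$ is constant. The surviving equation is the tangential relation $\kappa'(1-v\kappa)=v\tau\tau'$.

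The main obstacle — and the crux of (ii) — is combining these two relations. Differentiating $\kappa=v(\kappa^2+\tau^2)$ in $s$ (with $v$ constant) and substituting the tangential relation $v\tau\tau'=\kappa'(1-v\kappa)$ collapses the identity to $\kappa'=2\kappa'$, hence $\kappa'=0$; then $\kappa$ is constant and $\tau^2=\kappa/v-\kappa^2$ is constant, so $\alpha$ is a circular helix. Conversely, for a circular helix the value $v_0=\kappa/(\kappa^2+\tau^2)$ annihilates $K_{ss}$, making that $s$-curve a (straight-line) geodesic, which closes the equivalence. I expect this differentiate-and-substitute step to be the one requiring care, since it is what converts the two pointwise constraints into the global conclusion that $\kappa$ and $\tau$ are constant.
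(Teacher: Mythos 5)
Your proposal is correct and follows essentially the same route as the paper: all four parts are read off from Eq.\ (3.3), with asymptotic curves via $e=0$, geodesics via the vanishing of the three Frenet components of $U\times K_{ss}$ (using the same key observation that $1-v\kappa$ and $v\tau$ cannot vanish simultaneously, forcing $\kappa=v(\kappa^{2}+\tau^{2})$ and then constancy of $\kappa$ and $\tau$), and lines of curvature via $F=f=0$. The only differences are cosmetic: in (i) you treat the numerator of $e$ as a polynomial identity in $v$ rather than integrating $(1/\tau)^{\prime}=v(\kappa/\tau)^{\prime}$ as the paper does, and in (ii) you differentiate the constraint $\kappa=v(\kappa^{2}+\tau^{2})$ and substitute instead of substituting $v\kappa^{\prime}=v^{2}(\kappa^{2}+\tau^{2})^{\prime}$ into the normal-component equation --- algebraically equivalent maneuvers leading to the same conclusion.
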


\begin{proof}
\textit{i)} From Eq.(3.3), we have%
\begin{equation*}
\left \langle U,K_{ss}\right \rangle =0\Longleftrightarrow v\tau ^{^{\prime
}}+v^{2}\tau ^{2}(\dfrac{\kappa }{\tau })^{^{\prime }}=0
\end{equation*}%
\begin{equation*}
v\tau ^{^{\prime }}+v^{2}\tau ^{2}(\dfrac{\kappa }{\tau })^{^{\prime }}=0
\end{equation*}%
\begin{eqnarray*}
v &=&0\text{ \  \  \ or \  \  \ }(\dfrac{1}{\tau })^{^{\prime }}=v(\dfrac{\kappa 
}{\tau })^{^{\prime }} \\
v &=&0\text{ \  \  \ or \  \  \ }\kappa =\dfrac{1-c\tau }{v},
\end{eqnarray*}%
where $c$ is a constant. Then, the $s_{0}-$ parameter curve is the base
curve $K(s,0)=\alpha (s)$ or $\kappa $ and $\tau $ are constants. As a
result, the base curve $\alpha $ becomes a circular helix.\newline
\textit{ii) }$s-$parameter curves are also geodesic curves if and only if%
\begin{equation*}
U\times K_{ss}=\frac{1}{\sqrt{(1-v\kappa )^{2}+v^{2}\tau ^{2}}}\left[ 
\begin{array}{c}
(\kappa -v(\kappa ^{2}+\tau ^{2}))(1-v\kappa )T \\ 
+(v\kappa ^{^{\prime }}-\dfrac{v^{2}}{2}(\kappa ^{2}+\tau ^{2})^{^{\prime
}})N \\ 
+v\tau (\kappa -v(\kappa ^{2}+\tau ^{2}))B%
\end{array}%
\right] =0.
\end{equation*}%
Since $T$, $N$, and $B$ are linearly independent, we have%
\begin{eqnarray*}
(\kappa -v(\kappa ^{2}+\tau ^{2}))(1-v\kappa ) &=&0, \\
v\kappa ^{^{\prime }}-\frac{v^{2}}{2}(\kappa ^{2}+\tau ^{2})^{^{\prime }}
&=&0, \\
v\tau (\kappa -v(\kappa ^{2}+\tau ^{2})) &=&0.
\end{eqnarray*}%
From the expression of $U$, we see that both $1-v\kappa $ and $v\tau $
cannot be zero at the same time. Hence, by the first and last equations
above, we obtain $\kappa =v(\kappa ^{2}+\tau ^{2})$. If we substitute this
in the second equation, we get $\dfrac{v^{2}}{2}(\kappa ^{2}+\tau
^{2})^{^{\prime }}=0$. As $\kappa \neq 0$, $v\neq 0$. Then we have $\kappa
^{2}+\tau ^{2}=$ $constant$. Since$\ v$ and $\kappa ^{2}+\tau ^{2}$ are
constants, from $\kappa =v(\kappa ^{2}+\tau ^{2})$, it follows that $\kappa $
is a constant and thus $\tau $ is a constant. In other words, the base curve 
$\alpha $ is a circular helix.\newline
\textit{iii) Since }$U\times K_{vv}=0$ and $\left \langle
U,K_{vv}\right
\rangle =0$, $v-$parameter curves are both geodesic curves
and asymptotic curves, namely, $v-$parameter curves are straight lines.%
\newline
\textit{iv) We have }$F=0$. Moreover,%
\begin{equation*}
f=\dfrac{\tau }{\sqrt{(1-v\kappa )^{2}+v^{2}\tau ^{2}}}=0\Longleftrightarrow
\tau =0.
\end{equation*}%
Then, the parameter curves are also lines of curvature if and only if the
base curve $\alpha $ is a plane curve.
\end{proof}

\begin{theorem}
Let $K(s,v)$ be the binormal surface of $\alpha $. Then, the following
expressions are satisfied for the parameter curves of $K(s,v)$.\newline
\textit{i)} $s-$parameter curves are also asymptotic curves if and only the
curvatures of $\alpha $ hold the differential equation%
\begin{equation*}
v\tau ^{^{\prime }}-v^{2}\kappa \tau ^{2}-\kappa =0.
\end{equation*}%
ii) $s-$parameter curves are also geodesic curves if and only if the exactly
one $s_{0}-$ parameter curve is the base curve, i.e., $K(s,0)=\alpha (s)$ or
the base curve $\alpha $ is a plane curve.\newline
iii) $v-$parameter curves are straight lines.\newline
iv) The parameter curves are also lines of curvature if and only if the base
curve $\alpha $ is a plane curve.
\end{theorem}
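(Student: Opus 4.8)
The plan is to exploit the explicit fundamental-form data already computed in Eq.(3.4) for the binormal surface $K(s,v)=\alpha(s)+vB(s)$, together with the characterizations of geodesic, asymptotic, and line-of-curvature conditions recorded in the Preliminaries. Each of the four parts reduces to reading off a scalar or vector quantity from the Frenet expansions $K_{s}=T-v\tau N$, $K_{ss}=v\kappa\tau T+(\kappa-v\tau^{\prime})N-v\tau^{2}B$, $K_{vv}=0$, and the unit normal $U=\tfrac{1}{\sqrt{1+v^{2}\tau^{2}}}[-v\tau T-N]$.

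For part (i), an $s$-parameter curve is asymptotic precisely when $\langle U,K_{ss}\rangle=0$, which by Eq.(3.4) is the vanishing of the numerator $-v^{2}\kappa\tau^{2}-\kappa+v\tau^{\prime}$; rearranging gives exactly the stated differential equation $v\tau^{\prime}-v^{2}\kappa\tau^{2}-\kappa=0$, so this part is immediate. For part (iii), since $K_{vv}=0$ we have both $U\times K_{vv}=0$ and $\langle U,K_{vv}\rangle=0$, so the $v$-parameter curves are simultaneously geodesic and asymptotic, hence straight lines; this is the same argument used in Theorems 5 and 6. For part (iv), I would combine $F=0$ from Eq.(3.4) with $f=\tfrac{\tau}{\sqrt{1+v^{2}\tau^{2}}}$, invoke Theorem 1 (the parameter curves are lines of curvature iff $F=f=0$ at nonumbilical points), and conclude that the condition is $f=0\Longleftrightarrow\tau=0$, i.e. $\alpha$ is a plane curve.

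The substantive part is (ii), the geodesic condition. Here I would compute $U\times K_{ss}$ and set it to zero. First I expand the cross product using the Frenet frame; the coefficient of the resulting decomposition along $T,N,B$ will factor because $U$ itself is a combination of $T$ and $N$ only. I expect the $T$- and $B$-components to share a common factor, reducing the system to a single scalar equation whose factored form separates into the two stated alternatives. The main obstacle will be organizing this cross-product computation cleanly enough that the factorization $v\cdot(\text{something})=0$ emerges transparently: one branch must give $v=0$ (so the base curve $K(s,0)=\alpha(s)$ is the lone geodesic $s_{0}$-curve) and the complementary branch must force $\tau=0$ (so $\alpha$ is a plane curve). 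I would double-check that the vanishing of the $N$-component does not impose an extra constraint beyond these two cases, since $U$ has a nonzero $N$-component and the torsion factor $v\tau$ recurs throughout; a careful bookkeeping of which terms carry the factor $\tau$ versus the factor $v$ is what makes the clean dichotomy $v=0$ or $\tau=0$ fall out.
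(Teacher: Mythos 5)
Your proposal is correct and follows essentially the same route as the paper: parts (i), (iii), and (iv) are exactly the paper's arguments, and for part (ii) the paper carries out precisely the cross-product computation you outline, obtaining $U\times K_{ss}=\frac{1}{\sqrt{1+v^{2}\tau ^{2}}}[v\tau ^{2}T-v^{2}\tau ^{3}N+v^{2}\tau \tau ^{\prime }B]$. All three Frenet components carry the factor $v\tau $, so the dichotomy $v=0$ or $\tau =0$ emerges exactly as you anticipated, and the $N$-component $-v^{2}\tau ^{3}$ imposes no constraint beyond these two cases.
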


\begin{proof}
\textit{i)} From Eq.(3.4), we have%
\begin{equation*}
\left \langle U,K_{ss}\right \rangle =0\Longleftrightarrow -v^{2}\kappa \tau
^{2}-\kappa +v\tau ^{^{\prime }}=0.
\end{equation*}%
\textit{ii) }$s-$parameter curves are also geodesic curves if and only if%
\begin{equation*}
U\times K_{ss}=\frac{1}{\sqrt{1+v^{2}\tau ^{2}}}[v\tau ^{2}T-v^{2}\tau
^{3}N+v^{2}\tau \tau ^{^{\prime }}B]=0.
\end{equation*}%
Since $T$, $N$, and $B$ are linearly independent, we have%
\begin{eqnarray*}
v\tau ^{2} &=&0, \\
v^{2}\tau ^{3} &=&0, \\
v^{2}\tau \tau ^{^{\prime }} &=&0.
\end{eqnarray*}%
Then, it follows that $v=0$ or $\tau =0$, i.e., the $s_{0}-$ parameter curve
is the base curve $K(s,0)=\alpha (s)$ or the base curve $\alpha $ is a plane
curve.\newline
\textit{iii) Since }$U\times K_{vv}=0$ and $\left \langle
U,K_{vv}\right
\rangle =0$, $v-$parameter curves are both geodesic curves
and asymptotic curves, namely, $v-$parameter curves are straight lines.%
\newline
\textit{iv) We have }$F=0$. Moreover,%
\begin{equation*}
f=\frac{\tau }{\sqrt{1+v^{2}\tau ^{2}}}=0\Longleftrightarrow \tau =0.
\end{equation*}%
Then, the parameter curves are also lines of curvature if and only if the
base curve $\alpha $ is a plane curve.
\end{proof}

\begin{theorem}
Let $K(s,v)$ be the tangent developable surface of $\alpha $. Then, the
following expressions are satisfied for the parameter curves of $K(s,v)$.%
\newline
\textit{i)} $s-$parameter curves are also asymptotic curves if and only if
the exactly one $s_{0}-$ parameter curve is the base curve, i.e., $%
K(s,0)=\alpha (s)$ or the base curve $\alpha $ is a plane curve.\newline
ii) $s-$parameter curves are also geodesic curves if and only if the exactly
one $s_{0}-$ parameter curve is the base curve, i.e., $K(s,0)=\alpha (s)$.%
\newline
iii) $v-$parameter curves are straight lines.\newline
iv) The parameter curves cannot also be lines of curvature.
\end{theorem}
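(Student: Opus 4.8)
The plan is to extract all four statements directly from the partial derivatives of $K(s,v)=\alpha(s)+vT(s)$ and the fundamental coefficients recorded in (3.5), in parallel with the three preceding theorems. I will repeatedly use $U=\pm B$, the acceleration $K_{ss}=-v\kappa^{2}T+(\kappa+v\kappa^{\prime})N+v\kappa\tau B$, the mixed partial $K_{vs}=\kappa N$, and $K_{vv}=0$, together with $E=1+v^{2}\kappa^{2}$, $F=1$, $G=1$, $e=\pm v\kappa\tau$, and $f=g=0$.

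For part i), the $s$-parameter curve is asymptotic exactly when $\langle U,K_{ss}\rangle=e=0$. By (3.5) this is $\pm v\kappa\tau=0$, and since $\kappa\neq0$ by hypothesis it factors as $v=0$ or $\tau=0$: the first is the base curve $K(s,0)=\alpha(s)$, the second says $\alpha$ is planar, giving precisely the stated dichotomy.

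For part ii), I would impose the geodesic condition $U\times K_{ss}=0$. Using the Frenet products $B\times T=N$ and $B\times N=-T$, this evaluates to $U\times K_{ss}=\mp(\kappa+v\kappa^{\prime})T\mp v\kappa^{2}N$. Both coefficients must vanish, and since $\kappa\neq0$ the $N$-coefficient $v\kappa^{2}=0$ already forces $v=0$, i.e. the only $s_{0}$-parameter curve that can qualify is the base curve itself. I expect this to be the delicate step: at $v=0$ the parametrization is singular (there $K_{s}\times K_{v}=-v\kappa B$ vanishes, so $U=\pm B$ degenerates and the base curve is the edge of regression), so one must argue on the regular locus $v\neq0$, where the two coefficient equations admit no common solution, and then record $v=0$ as the lone boundary case. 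Isolating the $N$-coefficient---the only one carrying an unqualified factor of $v$---is the clean route to this conclusion.

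Parts iii) and iv) then follow at once. Because $K_{vv}=0$, both $U\times K_{vv}=0$ and $\langle U,K_{vv}\rangle=0$ hold identically, so every $v$-parameter curve is simultaneously geodesic and asymptotic, hence a straight line (the ruling). For the final claim I would appeal to Theorem 1: away from umbilics the parameter curves are lines of curvature if and only if $F=f=0$. Here $f=0$ by (3.5), but $F=\langle K_{s},K_{v}\rangle=1\neq0$, so the criterion of Theorem 1 can never be met and the parameter curves cannot form a line-of-curvature net.
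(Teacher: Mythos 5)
Your proposal is correct and takes essentially the same route as the paper: part i) reads off $e=\pm v\kappa \tau =0$ from (3.5), part ii) sets $U\times K_{ss}=\mp (\kappa +v\kappa ^{\prime })T\mp v\kappa ^{2}N$ equal to zero and uses $\kappa \neq 0$, part iii) follows from $K_{vv}=0$, and part iv) invokes Theorem 1. If anything, your write-up is more careful than the paper's: in ii) you flag that the parametrization is singular along $v=0$ (the edge of regression) and that the two coefficient equations admit no common solution on the regular locus $v\neq 0$, a subtlety the paper passes over silently, and in iv) you quote the correct values $F=1$, $f=0$ from (3.5), whereas the paper's proof misprints them as $F=0$ and $f=1$ (the conclusion is unchanged either way).
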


\begin{proof}
\textit{i)} From Eq.(3.5), we have%
\begin{equation*}
\left \langle U,K_{ss}\right \rangle =0\Longleftrightarrow \pm v\kappa \tau
=0.
\end{equation*}%
Since $\kappa \neq 0$, $v=0$ or $\tau =0$, i.e., the $s_{0}-$ parameter
curve is the base curve $K(s,0)=\alpha (s)$ or the base curve $\alpha $ is a
plane curve.\newline
\textit{ii) }$s-$parameter curves are also geodesic curves if and only if%
\begin{equation*}
U\times K_{ss}=\mp (\kappa +v\kappa ^{^{\prime }})T\mp v\kappa ^{2}N=0.
\end{equation*}%
Since $T$ and $N$ are linearly independent, we have%
\begin{eqnarray*}
\kappa +v\kappa ^{^{\prime }} &=&0, \\
v\kappa ^{2} &=&0.
\end{eqnarray*}%
Since $\kappa \neq 0$, it follows that $v=0$, i.e., the $s_{0}-$ parameter
curve is the base curve $K(s,0)=\alpha (s)$.\newline
\textit{iii) Since }$U\times K_{vv}=0$ and $\left \langle
U,K_{vv}\right
\rangle =0$, $v-$parameter curves are both geodesic curves
and asymptotic curves, namely, $v-$parameter curves are straight lines.%
\newline
\textit{iv) We have }$F=0$ and $f=1$. Therefore, the parameter curves cannot
also be lines of curvature.
\end{proof}

\begin{theorem}
Let $K(s,v)$ be the Darboux developable surface of $\alpha $. Then, the
following expressions are satisfied for the parameter curves of $K(s,v)$.%
\newline
\textit{i)} $s-$parameter curves are also asymptotic curves if and only if
the base curve $\alpha $ is a plane curve or a general helix.\newline
ii) $s-$parameter curves are also geodesic curves if and only if the base
curve $\alpha $ is a general helix.\newline
iii) $v-$parameter curves are straight lines.\newline
iv) The parameter curves are also lines of curvature.
\end{theorem}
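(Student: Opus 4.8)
The plan is to read everything off from the fundamental-form data already recorded in Eq.(3.6), together with the characterizations of asymptotic, geodesic, and line-of-curvature directions collected in Section 2 and in Theorem 1. Two of the four claims are then immediate. Because $K_{vv}=0$ for this surface, the $v$-parameter curves satisfy simultaneously $U\times K_{vv}=0$ and $\langle U,K_{vv}\rangle =0$, so they are at once geodesic and asymptotic, hence straight lines, which is (iii); indeed $v\mapsto B(s_{0})+vT(s_{0})$ is visibly a line. And since Eq.(3.6) gives $F=0$ and $f=0$ identically, Theorem 1 shows the parameter curves are lines of curvature away from umbilics, which is (iv).

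For (i), the $s$-parameter curves are asymptotic precisely when $e=\langle U,K_{ss}\rangle=0$. By Eq.(3.6) this reads $\pm\tau(v\kappa-\tau)=0$, which splits into $\tau=0$ or $v\kappa-\tau=0$. The first alternative says exactly that $\alpha$ is a plane curve. In the second, holding $v$ fixed along an $s$-curve forces $\tau/\kappa=v$ to be constant, so the Lancret Theorem (Theorem 2) identifies $\alpha$ as a general helix; the converse directions follow by reversing these steps, taking $v$ equal to the constant ratio $\tau/\kappa$ in the helix case.

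For (ii), I would instead compute the tangential part of the acceleration through $U\times K_{ss}$ with $U=\pm B$. Applying the Frenet cross-products $B\times T=N$ and $B\times N=-T$ to the expression $K_{ss}=-\kappa(v\kappa-\tau)T+(v\kappa-\tau)'N+\tau(v\kappa-\tau)B$ collapses it to $\mp\bigl[(v\kappa-\tau)'\,T+\kappa(v\kappa-\tau)\,N\bigr]$. Since $T$ and $N$ are independent, both coefficients must vanish; as $\kappa\neq 0$, the $N$-coefficient forces $v\kappa-\tau=0$, which then annihilates the $T$-coefficient automatically. Exactly as in (i), $v\kappa-\tau=0$ means $\tau/\kappa$ equals the constant $v$, so by Lancret $\alpha$ is a general helix, and conversely.

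The only step that is not pure Frenet bookkeeping is the recognition, common to (i) and (ii), that the algebraic relation $v\kappa-\tau=0$ is to be read as the constancy of $\tau/\kappa$ along the $s$-curve and then converted, via Lancret's theorem, into the geometric statement that the base curve is a general helix. Everything else reduces to the derivative and cross-product computations already performed for the Darboux developable surface in Section 3.
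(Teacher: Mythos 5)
Your proposal is correct and follows essentially the same route as the paper: parts (i), (iii), and (iv) are read off from Eq.(3.6) exactly as in the paper's proof, and part (ii) computes the same cross product $U\times K_{ss}=\mp\bigl[(v\kappa-\tau)'T+\kappa(v\kappa-\tau)N\bigr]$ before invoking Lancret. Your only (harmless) refinements are observing that the $T$-coefficient vanishes automatically once the $N$-coefficient does, and citing Theorem 1 explicitly for (iv); the substance is identical.
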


\begin{proof}
\textit{i)} From Eq.(3.6), we have%
\begin{equation*}
\left \langle U,K_{ss}\right \rangle =0\Longleftrightarrow \pm \tau (v\kappa
-\tau )=0.
\end{equation*}%
Then, $\tau =0$ or $\dfrac{\tau }{\kappa }=v=$ $constant$, i.e., the base
curve $\alpha $ is a plane curve or a general helix.\newline
\textit{ii) }$s-$parameter curves are also geodesic curves if and only if%
\begin{equation*}
U\times K_{ss}=\mp (v\kappa -\tau )^{^{\prime }}T\mp \kappa (v\kappa -\tau
)N=0.
\end{equation*}%
Since $T$ and $N$ are linearly independent, we have%
\begin{eqnarray*}
(v\kappa -\tau )^{^{\prime }} &=&0, \\
\kappa (v\kappa -\tau ) &=&0.
\end{eqnarray*}%
Then, it follows that $\dfrac{\tau }{\kappa }=v=$ $constant$, i.e., the base
curve $\alpha $ is a general helix.\newline
\textit{iii) Since }$U\times K_{vv}=0$ and $\left \langle
U,K_{vv}\right
\rangle =0$, $v-$parameter curves are both geodesic curves
and asymptotic curves, namely, $v-$parameter curves are straight lines.%
\newline
\textit{iv) We have }$F=f=0$. Consequently, the parameter curves are also
lines of curvature.
\end{proof}

\begin{theorem}
Let $K(s,v)$ be the rectifying developable surface of $\alpha $. Then, the
following expressions are satisfied for the parameter curves of $K(s,v)$.%
\newline
\textit{i)} $s-$parameter curves are also asymptotic curves if and only if $%
\dfrac{\tau }{\kappa }$ is a linear function.\newline
ii) $s-$parameter curves are also geodesic curves if and only if the exactly
one $s_{0}-$ parameter curve is the base curve, i.e., $K(s,0)=\alpha (s)$ or 
$\dfrac{\tau }{\kappa }$ is a linear function.\newline
iii) $v-$parameter curves are straight lines.\newline
iv) The parameter curves are also lines of curvature if and only if the base
curve $\alpha $ is a plane curve or $\dfrac{\tau }{\kappa }$ is a linear
function.
\end{theorem}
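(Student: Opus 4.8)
The plan is to follow the same four-part template used in the preceding theorems, reading everything off the fundamental-form data already recorded in Eq.(3.7) together with the Frenet relations. The three facts that drive all four parts are that here the unit normal is $U=\pm N$, that $f=0$ identically, and that $K_{vv}=0$. With these in hand the arguments are almost entirely formal, so I expect the only genuine computation to be a single cross product.

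For part (i), an $s$-parameter curve is asymptotic exactly when $\langle U,K_{ss}\rangle=e=0$. By Eq.(3.7) we have $e=\pm\kappa\bigl(1+v(\frac{\tau}{\kappa})'\bigr)$, and since $\kappa\neq0$ this vanishes iff $1+v(\frac{\tau}{\kappa})'=0$, i.e. $(\frac{\tau}{\kappa})'=-\frac1v$ is constant along the curve; equivalently $\frac{\tau}{\kappa}$ is a linear (affine) function of $s$, which yields both implications. For part (ii), I would compute $U\times K_{ss}$ directly from $U=\pm N$ and $K_{ss}=v(\frac{\tau}{\kappa})''T+\kappa(1+v(\frac{\tau}{\kappa})')N$. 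Using $N\times T=-B$ and $N\times N=0$, only the tangential term survives, giving $U\times K_{ss}=\mp v(\frac{\tau}{\kappa})''\,B$. The $s$-curve is geodesic iff this is zero, i.e. $v(\frac{\tau}{\kappa})''=0$, so either $v=0$ (the base curve $K(s,0)=\alpha(s)$) or $(\frac{\tau}{\kappa})''=0$ (so $\frac{\tau}{\kappa}$ is linear).

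Part (iii) is immediate: since $K_{vv}=0$, both $U\times K_{vv}=0$ and $\langle U,K_{vv}\rangle=0$, so the $v$-curves are simultaneously geodesic and asymptotic, hence straight lines — as is also visible from $v\mapsto\alpha(s_0)+v\overset{\sim}{D}(s_0)$ being a line. For part (iv), I would invoke the criterion quoted from $[1]$: the parameter curves are lines of curvature at a nonumbilic point iff $F=f=0$. Here $f=0$ by Eq.(3.7), so the condition collapses to $F=\frac{\tau}{\kappa}\bigl(1+v(\frac{\tau}{\kappa})'\bigr)=0$, forcing either $\frac{\tau}{\kappa}=0$ (so $\tau=0$, a plane curve) or $1+v(\frac{\tau}{\kappa})'=0$ (so $\frac{\tau}{\kappa}$ is linear, exactly as in (i)).

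The algebra in (i) and (iv) is routine; the one step needing care is the cross-product bookkeeping in (ii), where one must confirm that the $T$- and $N$-contributions cancel and leave a single $B$-component. A secondary point worth flagging is that the linear condition $1+v(\frac{\tau}{\kappa})'=0$ also makes $E=0$ in Eq.(3.7), i.e. it coincides with the degenerate/striction locus of the patch; I would therefore read the stated equivalences formally, as in the companion theorems, rather than as statements about a regular piece of surface, and note this briefly instead of developing a full regularity discussion.
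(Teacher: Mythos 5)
Your proposal is correct and follows essentially the same route as the paper's own proof: part (i) reads $e=\pm\kappa(1+v(\frac{\tau}{\kappa})')=0$ from Eq.(3.7), part (ii) reduces $U\times K_{ss}$ to the single $B$-component $\mp v(\frac{\tau}{\kappa})''B$, part (iii) uses $K_{vv}=0$, and part (iv) applies the $F=f=0$ criterion to get $\tau=0$ or $1+v(\frac{\tau}{\kappa})'=0$. Your added remark that the condition $1+v(\frac{\tau}{\kappa})'=0$ forces $E=0$ (the singular locus of the patch) is a pertinent observation the paper does not make, but it does not alter the argument.
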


\begin{proof}
\textit{i)} From Eq.(3.7), we have%
\begin{equation*}
\left \langle U,K_{ss}\right \rangle =0\Longleftrightarrow \pm \kappa (1+v(%
\dfrac{\tau }{\kappa })^{^{\prime }})=0.
\end{equation*}%
Since $\kappa \neq 0$, we get $\dfrac{\tau }{\kappa }=-\dfrac{1}{v}s+c$,
where $c$ is a constant.\newline
\textit{ii) }$s-$parameter curves are also geodesic curves if and only if%
\begin{equation*}
U\times K_{ss}=\pm v(\dfrac{\tau }{\kappa })^{^{^{\prime \prime }}}B=0.
\end{equation*}%
Then, it follows that $v=0$ or $(\dfrac{\tau }{\kappa })^{^{^{\prime \prime
}}}=0$, i.e., the $s_{0}-$ parameter curve is the base curve $K(s,0)=\alpha
(s)$ or $\dfrac{\tau }{\kappa }=-as+c$, where $a$ and $c$ are constants.%
\newline
\textit{iii) Since }$U\times K_{vv}=0$ and $\left \langle
U,K_{vv}\right
\rangle =0$, $v-$parameter curves are both geodesic curves
and asymptotic curves, namely, $v-$parameter curves are straight lines.%
\newline
\textit{iv) We have }$f=0$. Furthermore,%
\begin{equation*}
F=\dfrac{\tau }{\kappa }(1+v(\dfrac{\tau }{\kappa })^{^{\prime
}})=0\Longleftrightarrow \tau =0\text{ \  \  \ or \  \  \ }\dfrac{\tau }{\kappa }%
=-\dfrac{1}{v}s+c,
\end{equation*}%
where $c$ is a constant. Then, the base curve $\alpha $ is a plane curve or $%
\dfrac{\tau }{\kappa }$ is a linear function.
\end{proof}

\begin{theorem}
Let $K(s,v)$ be the tangential Darboux developable surface of $\alpha $.
Then, the following expressions are satisfied for the parameter curves of $%
K(s,v)$.\newline
\textit{i)} $s-$parameter curves are also asymptotic curves if and only if
the base curve $\alpha $ is a general helix or a slant helix.\newline
ii) $s-$parameter curves are also geodesic curves if and only if the base
curve $\alpha $ is a plane curve or a slant helix.\newline
iii) $v-$parameter curves are straight lines.\newline
iv) The parameter curves are also lines of curvature.
\end{theorem}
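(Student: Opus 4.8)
The plan is to follow the template established for the previous six developable surfaces: read the first- and second-fundamental coefficients off Eq.(3.8), impose the defining condition for each type of characteristic curve, and translate the resulting differential relation into a statement about $\alpha$ by invoking Theorems 2 and 3. Throughout one has $U=\pm \overset{-}{D}$ and $\kappa ^{2}+\tau ^{2}>0$ because $\kappa \neq 0$.

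For (i) I would take the asymptotic condition $e=\left \langle U,K_{ss}\right \rangle =0$ and use the value $e=\pm (\kappa ^{2}+\tau ^{2})\sigma (s)(v-\sigma (s))$ supplied by Eq.(3.8). Since $\kappa ^{2}+\tau ^{2}>0$, this factors into $\sigma (s)=0$ or $v=\sigma (s)$. In the first case $(\tau /\kappa )^{^{\prime }}=0$, so $\alpha $ is a general helix by the Lancret Theorem (Theorem 2); in the second case $\sigma $ equals the constant $v$, so $\alpha $ is a slant helix by Theorem 3.

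For (ii) I would compute the cross product $U\times K_{ss}$ using $K_{ss}=-\sigma ^{^{\prime }}(s)N^{^{\prime }}+(v-\sigma (s))N^{^{\prime \prime }}$ together with the Frenet identities $T\times B=-N$ and $B\times N=-T$. A short calculation gives $\overset{-}{D}\times N^{^{\prime }}=-\sqrt{\kappa ^{2}+\tau ^{2}}\,N$ and $\overset{-}{D}\times N^{^{\prime \prime }}=\sqrt{\kappa ^{2}+\tau ^{2}}(\kappa T-\tau B)-\tfrac{(\kappa ^{2}+\tau ^{2})^{^{\prime }}}{2\sqrt{\kappa ^{2}+\tau ^{2}}}N$. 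The geodesic condition $U\times K_{ss}=0$ then separates, by linear independence of $T,N,B$, into $(v-\sigma )\kappa =0$, $(v-\sigma )\tau =0$, and $\sigma ^{^{\prime }}\sqrt{\kappa ^{2}+\tau ^{2}}-(v-\sigma )(\sqrt{\kappa ^{2}+\tau ^{2}})^{^{\prime }}=0$. Since $\kappa \neq 0$, the first relation forces $v=\sigma (s)$, whence $\sigma $ is the constant $v$ and $\alpha $ is a slant helix; the remaining plane-curve alternative corresponds to the degenerate regime $\tau \equiv 0$ (where $\sigma \equiv 0$ and the only admissible $s$-curve is the base curve).

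For (iii), because $K_{vv}=0$ one has both $U\times K_{vv}=0$ and $\left \langle U,K_{vv}\right \rangle =0$, so the $v$-curves are simultaneously geodesic and asymptotic, i.e. straight lines, exactly as in the earlier cases. For (iv), Eq.(3.8) yields $F=0$ and $f=0$ identically, so Theorem 1 immediately gives that the parameter curves are lines of curvature with no further hypothesis on $\alpha $. The one delicate point I expect is the cross-product bookkeeping in (ii)—correctly expanding $\overset{-}{D}\times N^{^{\prime \prime }}$ with $N^{^{\prime \prime }}=-\kappa ^{^{\prime }}T-(\kappa ^{2}+\tau ^{2})N+\tau ^{^{\prime }}B$—and reconciling the clean slant-helix conclusion with the stated plane-curve alternative.
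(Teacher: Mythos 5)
Your proposal is correct and follows essentially the same route as the paper: part (i) reads $e=0$ off Eq.(3.8) and invokes the Lancret theorem and Theorem 3, part (ii) expands $U\times K_{ss}$ and uses linear independence of $T,N,B$ to force $\sigma(s)=v$ (the paper's displayed system is the same one you derive, up to an immaterial sign in the $N$-component), and parts (iii) and (iv) are verbatim the paper's arguments from $K_{vv}=0$ and $F=f=0$. If anything, your remark that the plane-curve alternative in (ii) is a degenerate case ($\tau\equiv 0$ forces $v=\sigma=0$) is more careful than the paper, which simply states the dichotomy $\tau=0$ or $\sigma(s)=v=\text{constant}$.
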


\begin{proof}
\textit{i)} From Eq.(3.8), we have%
\begin{equation*}
\left \langle U,K_{ss}\right \rangle =0\Longleftrightarrow \pm (\kappa
^{2}+\tau ^{2})\sigma (s)(v-\sigma (s))=0.
\end{equation*}%
In the last equation, $\kappa $ and $\tau $ cannot be zero at the same time.
Hence, it concludes that $\sigma (s)=0$ or $\sigma (s)=v=$ $constant$ in
other words the base curve $\alpha $ is a general helix or a slant helix.%
\newline
\textit{ii) }$s-$parameter curves are also geodesic curves if and only if%
\begin{equation*}
U\times K_{ss}=\mp \kappa (v-\sigma (s))\sqrt{\kappa ^{2}+\tau ^{2}}T\pm
(\kappa ^{2}+\tau ^{2})(\frac{\sigma (s)-v}{\sqrt{\kappa ^{2}+\tau ^{2}}}%
)^{^{\prime }}N\pm \tau (v-\sigma (s))\sqrt{\kappa ^{2}+\tau ^{2}}B=0.
\end{equation*}%
Since $T$, $N$, and $B$ are linearly independent, we obtain%
\begin{eqnarray*}
\kappa (v-\sigma (s))\sqrt{\kappa ^{2}+\tau ^{2}} &=&0, \\
(\kappa ^{2}+\tau ^{2})(\frac{\sigma (s)-v}{\sqrt{\kappa ^{2}+\tau ^{2}}}%
)^{^{\prime }} &=&0, \\
\tau (v-\sigma (s))\sqrt{\kappa ^{2}+\tau ^{2}} &=&0.
\end{eqnarray*}%
From this, we get $\tau =0$ or $\sigma (s)=v=$ $constant$, i.e., the base
curve $\alpha $ is a plane curve or a slant helix.\newline
\textit{iii) Since }$U\times K_{vv}=0$ and $\left \langle U,K_{vv}\right
\rangle =0$, $v-$parameter curves are both geodesic curves and asymptotic
curves, namely, $v-$parameter curves are straight lines.\newline
\textit{iv) We have }$F=f=0$. Consequently, the parameter curves are also
lines of curvature.
\end{proof}

\end{document}